\renewcommand\thesection{\Roman{section}}
\newtheorem{theorem}{Theorem}
\newtheorem{lemma}{Lemma}
\newtheorem{assumption}{Assumption}
\newtheorem{remark}{Remark}
\newtheorem{proof}{Proof}
\title{A Distributed Parallel Optimization Algorithm via Alternating Direction Method of Multipliers}
\author{Ziye Liu, Fanghong Guo, Wei Wang, Xiaoqun Wu
	\thanks{Z. Liu and W. Wang are with the School of Automation Science and Electrical Engineering, Beihang University, Beijing, 100191, China (Email: liuziye@buaa.edu.cn, w.wang@buaa.edu.cn. Corresponding Author: Wei Wang).}
	\thanks{F. Guo is with the Department of Automation, Zhejiang University of Technology, Hangzhou 310032, China (Email: fhguo@zjut.edu.cn).}
	\thanks{X. Wu is with the School of Mathematics and Statistics, Wuhan University, Hubei 430072, China (Email: xqwu@whu.edu.cn)}
}
\begin{document}
	\maketitle
	\section{abstract}
	Alternating Direction Method of Multipliers (ADMM) algorithm has been widely adopted for solving the distributed optimization problem (DOP). 
	In this paper, a new distributed parallel ADMM algorithm is proposed, which allows the agents to update their local states and dual variables in a completely distributed and parallel manner by modifying the existing distributed sequential ADMM. 
	Moreover, the updating rules and storage method for variables are illustrated. 
	It is shown that all the agents can reach a consensus by asymptotically converging to the optimal solution. 
	Besides, the global cost function will converge to the optimal value at a rate of $O(1/k)$. 
	Simulation results on a numerical example are given to show the effectiveness of the proposed algorithm. 
	\section{Introduction}
	
	Optimization in networked multi-agent systems has widespread applications including 
	wireless sensor networks \cite{zhang2015distributed}, 
	internet of things \cite{wu2018game}, 
	distributed privacy preservation \cite{zhang2018admm}, 
	distributed Nash Equilibrium seeking \cite{salehisadaghiani2019distributed,salehisadaghiani2016distributed} and 
	machine learning \cite{cortes1995support}. 
	Such a problem was initially handled in a centralized algorithm \cite{boyd2010distributed}, which means that a centralized fusion is required to collect and process the information of all the agents through an underlying communication network. Clearly, tremendous communication and computation resources need to be consumed to implement the centralized optimization methods \cite{yang2019survey,nedic2018network}. For instance, in linear regression problems, it is often prohibitive to gather all the data from different applications of interest \cite{bazerque2010distributed}. In economic dispatch (ED) problems \cite{guo2017hierarchical,guo2017distributed}, centralized manner cannot satisfy the plug-to-play demand. 
	
	To remove the mentioned limitations, distributed optimization problem (DOP) is considered. A common DOP is illustrated as follows,
	\begin{align} 	\label{problem1}
		\min_{x\in \mathbb{R}^d}\quad \sum_{i=1}^{n} f_{i}(x).
	\end{align}
	where $x\in \mathbb{R}^d$ is the global variable to be optimized, $f_{i} : \mathbb{R}^d\rightarrow \mathbb{R}$ is the local cost function privately known by agent $i$, while unknown by other agents for $j\neq i$. In each agent, an estimate of the optimal solution, which is also known as the state, will be generated by applying an iterative algorithm. Then the states need often to be transmitted among networked agents for the purpose of reaching a consensus as the algorithm runs.
	Finally, the global optimization problem (\ref{problem1}) can be solved through collaborations among different agents. 
	
	To solve the DOP, two main classes of distributed optimization algorithms have been proposed, namely the subgradient-based methods and dual-based methods. 
	In the former class, a subgradient computation needs to be conducted in each iteration \cite{nedic2009distributed}. Each agent updates its state with the local subgradient information and states collected from other agents. The subgradient-based methods have been developed for various scenarios \cite{nedic2010constrained,xu2017convergence,duchi2011dual,xi2017dextra,shi2015extra,pu2020push}. 
	In the dual-based methods, a Lagrangian-related functions need to be constructed in the update steps. Alternating Direction Method of Multipliers (ADMM) is a well-known dual-based method, of which the early works can be traced back to \cite{bauschke1994dykstra,combettes2007douglas,zhang2014asynchronous}. By treating the states as primal variables, ADMM minimizes an augmented Lagrangian function and updates both primal and dual variables alternately. In recent years, ADMM has become a popular way to solve DOPs in various areas. 
	Compared with the subgradient-based methods, ADMM-based methods can achieve faster convergence rate \cite{boyd2010distributed,shi2014linear}, and improved robustness against Gaussian noises\cite{simonetto2014distributed}.
	
	
	It is worth noting that for some early works on ADMM-based algorithms developed to solve the optimization problems in multi-agent systems, the utilization of the centralized fusion units cannot be avoided \cite{boyd2010distributed}. 
	In \cite{deng2017parallel}, an ADMM-based optimization algorithm named Jacobi-Proximal ADMM is proposed. It adds a proximal term for each primal variable and a damping parameter for the update step of dual variable. Though the method shows a satisfactory performance, the dual variables need to be handled in a centralized manner. 
	To avoid the need of centralized data processing, some distributed ADMM-based algorithms have been proposed.
	A sequential ADMM is proposed in \cite{wei2012distributed}, in which each agent updates its states by following a predefined order. Both primal and dual variables are computed in a distributed manner. However, the predefined updating order may cost more computational resources, which makes it not well suited for large-scaled systems. 
	A novel distributed ADMM algorithm with parallel manner is proposed in \cite{yan2020parallel}. It reduplicates dual variables for each edge and doubles the dimension of edge-node incidence matrix to make parallelism amenable, hence the algorithm may need extra storage space.
	
	In this paper, the sequential ADMM algorithm in \cite{wei2012distributed} will be modified by presenting a new distributed parallel ADMM algorithm. Extra terms are added in the primal variable updating rule, and the primal variables of different iterations are adopted to update the dual variable for each agent. Main features of the newly presented algorithm are summarized as follows.
	
	\begin{enumerate}[1)]
		\item The presented algorithm is fully distributed, in the sense that a central unit for data collection and processing is not needed for solving the optimization problem. The update of both primal and dual variables for each agent only depends on its local and neighboring information. 
		
		\item The algorithm is suitable for parallel computing. By modifying the existing algorithm in \cite{wei2012distributed}, each agent updates primal variable in a parallel manner rather than a preset sequential order. Moreover, extra dimensions of variables is not needed compared with \cite{yan2020parallel}. 
		
		\item 

		To guarantee the algorithm convergence, compared to the results in [26], an extra indicator matrix and some carefully-designed terms are added in the primal variables updating rule. Moreover, variational inequality method is adopted to indicate the individual state convergence of each agent. 
		

	\end{enumerate}
	
	The remainder of our paper is organized as follows.
	In Section \ref{section2}, the considered unconstrained DOP is presented and the sequential ADMM algorithm in \cite{wei2012distributed} is reviewed.
	In Section \ref{section3}, the distributed parallel ADMM algorithm for solving distributed optimization problem with parallel manner is developed, and the convergence analysis of our presented algorithm is shown in Section \ref{section4}.
	In Section \ref{section5}, numerical simulation results are provided to validate the effectiveness of the proposed method, followed by the conclusion drawn in Section \ref{section6}. 
	
	\emph{Notation:} 
	For a column vector $x$, $x^{\rm T}$ denotes the transpose of $x$. 
	$\|x\|$ denotes the standard Euclidean norm, $\|x\|^{2} =x^{\rm T}x $. 
	For a matrix $A$, $A^{\rm T}$ denotes the transpose of $A$. 
	$A_{ij}$ denotes the entry located on the $i$th row and $j$th column of matrix $A$. $[A]_{i}$ and $[A]^{j}$ denotes the $i$th column and $j$th row of $A$, respectively. 
	For matrices $A=[A_{ij}] \in\mathbb{R}^{m\times n}$ and $B=[B_{ij}]\in\mathbb{R}^{m\times n} $, $A \pm B = [A_{ij} \pm B_{ij}] $ and 
	$\max\{A,B\} \triangleq [\max\{A_{ij},B_{ij}\}] $. 
	$\partial f(x)$ represents the set of all subgradient of function $f(x)$ at $x$. 
	For a nonempty and finite set $F$, $|F|$ denotes the cardinality of the set.

	\section{Problem Statement}\label{section2}
	In this paper, we shall revise the distributed ADMM algorithm in \cite{wei2012distributed}, such that it can be implemented in a parallel manner. Before we present the details of our proposed algorithm, some necessary preliminaries are provided in this section.
	
	\subsection{Problem Statement}
	In this paper, an unconstrained distributed optimization problem, as shown in (1), will be considered. To be more specific, $n$ agents indexed by $1,2,..., n$ are aimed at solving the optimization problem collaboratively under a fixed undirected graph $\mathcal{G}=\{\mathcal{V}, \mathcal {E}\}$, where $\mathcal{V}=\{1, 2... n\}$ and $\mathcal{E}$ denote the set of nodes and the set of edges connecting two distinct agents, respectively. Suppose that there is at most one edge $e_{ij}$ with a subscript pair $(i,j)$ connecting agents $i$ and $j$. We prescribe that the first subscript $i$ and the second subscript $j$ satisfy $i < j$. If agents 1 and 2 are connected in a given graph, $e_{12}$ represents the sole edge connecting them.
	
	Let us recall the basic idea of solving a distributed optimization problem \cite{boyd2010distributed,wei2012distributed} 
	\begin{align} \label{problem2}
		\min_{x\in \mathbb{R}}\quad \sum_{i=1}^{n} f_{i}\left(x\right)
	\end{align}
	where $x\in \mathbb{R}$ is the global variable to be optimized, $f_{i}:$ $\mathbb{R}\rightarrow \mathbb{R}$ is the local cost function stored in agent $i$.

	The goal of Problem (\ref{problem2}) is to find the minimizer $x^*$, such that the function $\sum_{i=1}^{n} f_{i}(x)$ can be minimized when $x=x^\ast$. To develop a distributed algorithm for solving Problem (\ref{problem2}), this problem should often be reformulated to accommodate the distributed property. Normally, each agent $i$ holds an \emph{estimate} of $x^\ast$, which can be denoted by $x_i$. As the optimization algorithm goes on, each agent $i$, for $i\in \mathcal{V}$, will utilize its local estimate and the information received from its neighbors till current iteration to update its local estimate for the next iteration. After sufficient iterations, the estimates for all agents tend to reach a consensual value. The form of Problem (\ref{problem2}) can thus be changed as follows,
	\begin{align}\label{problem3}
		\min_{x_{i}\in \mathbb{R}} \quad &\sum_{i=1}^{n} f_{i}\left(x_{i}\right), \nonumber \\
		\operatorname{s.t.} \quad &x_i=x_j,\forall e_{ij}\in\mathcal{E}.
	\end{align}
	
	The linear constraint implies the consensus mentioned above, hence Problem (\ref{problem3}) and Problem (\ref{problem2}) have the same solution.
	
	\begin{remark}
		It is worth mentioning that multi-dimensional problems like Problem \ref{problem1} are more general in practice. For simplicity, we consider one-dimensional scenario. Multi-dimensional can be expanded from one-dimensional problem by Kronecker product \cite{wei2012distributed,yan2020parallel}.
	\end{remark}

	\subsection{Distributed Sequential ADMM Algorithm \cite{wei2012distributed}}
	To solve Problem (\ref{problem3}), a distributed sequential ADMM algorithm is presented in \cite{wei2012distributed}. Firstly, the updating rules of agent $i$ in the $k$th iteration are developed as follows, 
	\begin{align}
		x_{i}^{k+1} =& \underset{x_{i}}{\operatorname{argmin}} 
		\Bigg \{ 
		f_{i}\left(x_{i}\right) \nonumber \\
		&+\frac{\rho}{2} \sum_{j \in P_{i}} \left \|x_{j}^{k+1}-x_{i}-\frac{1}{\rho} \lambda_{j i}^{k}\right \|^{2} \nonumber\\
		&+\frac{\rho}{2} \sum_{j \in S_{i}}\left \|x_{i}-x_{j}^{k}-\frac{1}{\rho} \lambda_{i j}^{k}\right\|^{2} 
		\Bigg \}, \label{s1}\\
		\lambda_{ji}^{k+1}=& \lambda_{ji}^{k}-\rho\left(x_{j}^{k+1}-x_{i}^{k+1}\right)
		\label{s2}
	\end{align}
	where $\rho$ is a positive penalty parameter. $\lambda_{ji}$ denotes the dual variable associated with the constraint on edge $e_{ji}$ stored in agent $i$. Define $N_{i}=\{j\in \mathcal{V}|e_{ij}\in \mathcal{E}\}$ as the set of neighbors of agent $i$. Furthermore, the subsets of neighbors $P_{i}=\{j|e_{ji}\in \mathcal{E}, j<i\}$ and $S_{i}=\{j|e_{ij}\in \mathcal{E}, j>i\}$ are called the \emph{predecessors} and \emph{successors} of agent $i$, respectively. Take the sample graph of 4 agents in Fig. \ref{fig1} as an example. $N_2 = \{1,4\}$, $P_2=\{1\}$, and $S_2=\{4\}$. 
	\begin{figure}[H]
		\includegraphics[height=2.3cm]{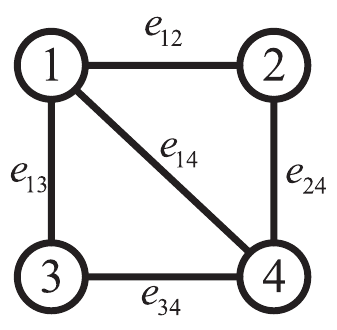}
		\centering
		\caption{Network example with 4 nodes and subscripts of each edge.}\label{fig1}
		\label{f1}
	\end{figure}
	
	The detailed procedure of the distributed sequential ADMM algorithm developed in \cite{wei2012distributed} is provided below.
	\begin{algorithm}[h]
		\caption{\cite{wei2012distributed} Distributed sequantial ADMM algorithm}
		\begin{enumerate}[1:]
			\item \textbf{Inputs:} $x_{i}^{0}, \lambda_{ij}^{0}$ for each $i\in \mathcal{V}$. 
			\item \textbf{for} $k=1, 2... $\textbf{do}
			\item \quad \textbf{for} $i=1,2...n $ \textbf{do}
			\item \qquad Agent $i$ updates $x_{i}^{k+1}$ according to (\ref{s1}).
			\item \qquad Agent $i$ sends $x_{i}^{k+1}$ to its successors. 
			
			\item \quad \textbf{end for}
			\item Each agent $i$ updates its $\lambda_{ji}^{k+1}$ for $j \in P_{i}$ according to (\ref{s2}), and send $\lambda_{ji}^{k+1}$ to its neighbors.
			\item \textbf{end for}
			\item \textbf{Output:} $x_i^k$ $\forall i\in \mathcal{V}$ 
		\end{enumerate}
	\end{algorithm}
	
	\begin{remark} 
		The distributed sequential ADMM algorithm in \cite{wei2012distributed} is developed based on standard ADMM \cite{boyd2010distributed}.
		The distributed feature of the algorithm can be observed from the updating rules (4)-(5). More specifically, only locally available information received from its neighbors is needed to update both primal variable $x_i$ and dual variable $\lambda_{ji}$, with $j<i$, for each agent $i$. In other words, center node is not necessary for data collection and processing to implement the algorithm. 
		
		Nevertheless, the updating process in (\ref{s1}) is calculated in a sequential order from agent $1$ to $n$. Each agent $i$ needs to collect its \emph{predecessors'} information obtained in iteration $k+1$ (i.e. $x_j^{k+1}, j\in P_i$) to update its local state in iteration $k+1$ (i.e. $x_i^{k+1}$). It implies that the updating processes for the entire group should be operated from agent $1$ to agent $n$ in a sequential order, as depicted in Fig. \ref{fig2}. Hence, the sequential ADMM algorithm may not be suitable to solve distributed optimization problems for large scaled multi-agent systems, since the total waiting time will be increased to an unacceptable level when the number of agents is large. 
		
		%
	\end{remark}
	
	\begin{figure}[H]
		\centering
		\includegraphics[height=2.2cm]{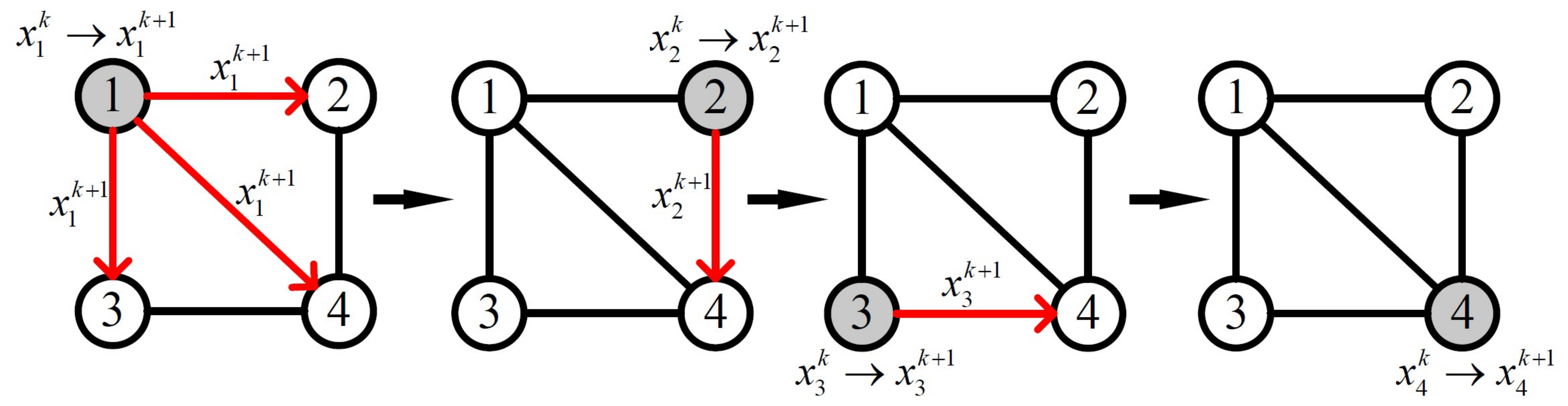}
		\centering
		\caption{$x_i$ updates in a sequential manner. Obviously, only when an agent finishes updating and transmitting procedures of its local state, its successor neighbour can perform its calculations.}\label{fig2}
	\end{figure}
	
	\section{A Distributed Parallel ADMM algorithm}\label{section3}
	
	To remove the limitation of distributed sequential ADMM algorithm \cite{wei2012distributed} as summarized in Section \ref{section2}, a new distributed ADMM algorithm is presented in this section. We first present the updating rules of each agent $i$ in the $k$th iteration:

	
	\begin{align}
		x_{i}^{k+1} 
		=& \underset{x_{i}}{\operatorname{argmin}} \Bigg\{f_{i} (x_{i}) \nonumber\\
		&+\left(\sum_{j\in P_{i}} \lambda _{ji}^{k}
		- \sum_{j\in S_{i}} \lambda _{ij}^{k}\right) x_{i}
		+\frac{\rho}{2}\sum_{j\in N_{i}} \left\|x_{i}-x_{j}^{k} \right\|^{2}\nonumber\\
		&+\left(1+\epsilon_1\right)\rho |P_{i}| \left\|x_{i}-x_{i}^{k} \right\|^{2}
		+\epsilon_2\rho |S_{i}| \left\|x_{i}-x_{i}^{k} \right\|^{2}\nonumber\\
		&-\rho \left(\sum_{j\in P_{i}} x_{j}^{k} 
		- |P_{i}| x_{i}^{k}\right) x_{i} \Bigg\}, \label{MADMM1}\\
		\lambda_{ij}^{k+1}=&\lambda_{ij}^{k} - \rho\left(x_{i}^{k+1}-x_{j}^{k}\right) \label{MADMM2}
	\end{align}
	where $\rho$ is a positive penalty parameter. $\epsilon_1$ and $\epsilon_2$ are constants satisfying $\epsilon_1>0$ and $\epsilon_2>0$. $P_i$ and $S_i$ are defined the same as mentioned above in Section \ref{section2}. $\lambda_{ij}$ denotes the dual variable associated with the constraint on edge $e_{ij}$ stored in agent $i$. Notice that according to the subscript, dual variable $\lambda_{ij}$ is stored in different agents in these two algorithms. 
	The detailed procedure of the newly presented distributed ADMM algorithm is summarized below. 
	
	
	\begin{algorithm}[H]
		\caption{Distributed Parallel ADMM algorithm}
		\begin{enumerate}[1:]
			\item \textbf{Input:} $x_{i}^{0}, \lambda_{ij}^{0}$ for each $i\in \mathcal{V}$. 
			\item \textbf{for} $k=1, 2... $\textbf{do}
			\item \quad Each agent $i$ updates $x_{i}^{k+1}$ according to (\ref{MADMM1}).
			\item \quad Each agent $i$ updates $\lambda_{ij}^{k+1}$ for $j \in S_{i}$ according to (\ref{MADMM2}).
			
			\item \quad Each agent $i$ sends $x_{i}^{k+1}$ and $\lambda_{ij}^{k+1}$ to its neighbors. 
			\item \textbf{end for}
			\item \textbf{Output:} $x_i^k$ $\forall i\in \mathcal{V}$ 
		\end{enumerate}
	\end{algorithm}
	
	%
	
	\begin{remark}
		Note that the definitions of the \emph{predecessors} and \emph{successors} of the distributed sequential ADMM algorithm in \cite{wei2012distributed} and our newly proposed algorithm are the same, mathematically. 

		However, the proposed algorithm alternates the sequential updating manner to a parallel way. As observed from (6), no terms related to $x_j^{k+1}$ are needed to update $x_i^{k+1}$. Hence all the $n$ agents can update both primal and dual variables simultaneously, rather than waiting for its predecessors to complete their updating procedures. For easier understanding the major difference between Algorithms 1 and 2, the parallel updating manner of $x_i^{k+1}$ for the considered group of 4 agents is plotted in Fig. \ref{fig3}.
	\end{remark}
	\begin{figure}[H]
		\centering
		\includegraphics[height=2.3cm]{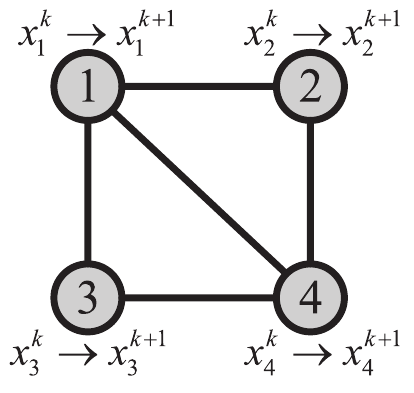}
		\centering
		\caption{$x_i$ updates with parallel manner, agents can simultaneously update states.}\label{fig3}
	\end{figure}
	
	


	\section{Convergence Analysis}\label{section4}	
	In this section, we shall firstly show that all the agents' states $x_i^k$ will converge to a unique optimal solution $\bar x^\ast$ as the number of iterations approaches infinity. Then the convergence rate will be analyzed. 
	
	Denote the global cost function by $F({x})=\sum_{i=1}^{n}f_{i}({x_{i})}$, where $x=[x_1,x_2,...,x_n]^{\rm T}\in \mathbb{R}^n$, and the optimal solution vector by $\bar{x}^*=\bold{1} x^*\in \mathbb{R}^n$, where $\bold{1}$ is the vector that all elements are $1$. Let the vector $\lambda$ be the Lagrange multiplier vector. For clarity, $\lambda$ is stacked by dual variables with subscript numbers arranged in the ascending lexicographical order. 
	Then an edge-node incidence matrix $A=[A_{pq}]\in \mathbb{R}^{m \times n}$ as defined in \cite{wei2012distributed} is reintroduced to describe the existence of edges among any two distinct agents for a given graph. 
	Suppose that $\lambda_{pq}$ is the $r$th element of $\lambda$, which corresponds to the $r$th row of $A$ (denoted by $[A]^{r}$). Note that the elements in $[A]^{r}$ satisfy that $A_{rp} = 1, A_{rq} = -1$ and other entries are $0$. Taking Fig. \ref{fig1} as an example, the vector $\lambda$ is written as $\lambda =[\lambda_{12}, \lambda_{13}, \lambda_{14}, \lambda_{24}, \lambda_{34}]^{\rm T}$. The corresponding edge-incidence matrix $A$ is given below. 
	\begin{align}
		A=\begin{bmatrix}\label{A}
			1 & -1 & 0 & 0\\ 
			1 & 0 & -1 & 0\\ 
			1 & 0 & 0 & -1\\ 
			0 & 1 & 0 & -1\\ 
			0 & 0 & 1 & -1
		\end{bmatrix}
	\end{align}
	%
	
	Thus Problem (\ref{problem3}) can be rewritten in the following compact form,
	\begin{align}\label{problem4}
		\min_{x_{i}\in \mathbb{R}} \quad &\sum_{i=1}^{n} f_{i}\left(x_{i}\right), \nonumber \\
		\operatorname{s.t.} \quad &Ax=0.
	\end{align}
	\begin{remark}
		Note that (\ref{problem2}), (\ref{problem3}) and (\ref{problem4}) are actually the same problem, though in different forms. In this section, we use (\ref{problem4}) uniformly in the following lemmas and theorems.
	\end{remark}
	
	We introduce two additional matrices, i.e. $B=\max\{\bold{0}, A\}$ and $E=A-B$ for further illustration, where $\bold{0}$ represents a matrix with all entries zero. Denote Lagrangian function of this problem by $L(x,\lambda)=F(x)-\lambda^{\rm T}Ax$. Then the following standard assumptions in\cite{wei2012distributed}, indicating the property of cost functions and the existence of the saddle point, are imposed. 
	
	\begin{assumption}
		Each cost function $f_{i}: \mathbb{R}\rightarrow \mathbb{R}$ is closed, proper and convex.
	\end{assumption}
	
	\begin{assumption}
		The Lagrangian function has a saddle point, that is there exists a solution variable pair $\{x^{*}, \lambda^{*}\}$ with
		\begin{align}
			L\left(x^{*}, \lambda\right) \leqslant L\left(x^{*}, \lambda^{*}\right) \leqslant L\left(x, \lambda^{*}\right).
		\end{align}
	\end{assumption}
	
	Before establishing the main theorem, the following lemmas are presented as preliminary results. 

	\begin{lemma}(See \cite{deng2017parallel})
		\label{lemma1}
		Problem (\ref{problem4}) is equivalent to the variational inequality as follows,
		\begin{align}\label{VI}
			F\left(x\right) - F\left(x^{*}\right) 
			+\left(u - u^{*}\right)^{\rm T}\mathscr{F}\left(u^{*}\right) \geqslant 0, \forall u
		\end{align}
		where $u=(x^{\rm T}, \lambda^{\rm T})^{\rm T}\in \mathbb{R}^{n+m}$, $u^{*}=({x^*}^{\rm T}, {\lambda^*}^{\rm T})^{\rm T}\in \mathbb{R}^{n+m}$. $\mathscr{F}(u)=[-[A]_1^{T} \lambda, -[A]_2^{T} \lambda,..., -[A]_{n}^{T}\lambda, Ax]^{T}$ is the solution of the variational inequality (3). 
	\end{lemma}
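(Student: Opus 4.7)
The plan is to show that the variational inequality in (\ref{VI}) is nothing but a compact repackaging of the KKT / saddle-point conditions for Problem (\ref{problem4}). Two directions must be checked: every saddle point $(x^*,\lambda^*)$ of $L$ makes the VI hold, and conversely, every $u^{*}=(x^{*{\rm T}},\lambda^{*{\rm T}})^{\rm T}$ satisfying (\ref{VI}) for all $u$ gives an $x^{*}$ that solves (\ref{problem4}). Under Assumptions 1--2, standard Lagrangian duality then makes ``$x^{*}$ solves (\ref{problem4})'' and ``there exists $\lambda^{*}$ such that $(x^{*},\lambda^{*})$ is a saddle point'' interchangeable, which closes the equivalence.

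\textbf{Step 1 --- Unpack the VI.} First I would rewrite $\mathscr{F}(u^{*})$ in block form. Since $[A]_{i}$ is the $i$th column of $A$, the scalar entries $-[A]_{i}^{\rm T}\lambda^{*}$ assemble into $-A^{\rm T}\lambda^{*}$, so $\mathscr{F}(u^{*})$ has top block $-A^{\rm T}\lambda^{*}$ and bottom block $Ax^{*}$. Expanding the inner product block-wise gives
\begin{align*}
(u-u^{*})^{\rm T}\mathscr{F}(u^{*}) = -(x-x^{*})^{\rm T}A^{\rm T}\lambda^{*} + (\lambda-\lambda^{*})^{\rm T}Ax^{*},
\end{align*}
so (\ref{VI}) is equivalent to
\begin{align*}
F(x)-F(x^{*}) - \lambda^{*{\rm T}}A(x-x^{*}) + (\lambda-\lambda^{*})^{\rm T}Ax^{*} \geq 0, \qquad \forall\, x,\lambda.
\end{align*}

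\textbf{Step 2 --- Two-way equivalence.} For the forward direction, pick a saddle point $(x^{*},\lambda^{*})$ given by Assumption 2. The inequality $L(x^{*},\lambda)\leq L(x^{*},\lambda^{*})$ valid for every $\lambda$ reads $(\lambda^{*}-\lambda)^{\rm T}Ax^{*}\leq 0$ for all $\lambda$, which forces primal feasibility $Ax^{*}=0$ and kills the last cross term. The inequality $L(x^{*},\lambda^{*})\leq L(x,\lambda^{*})$ is exactly $F(x)-F(x^{*})-\lambda^{*{\rm T}}A(x-x^{*})\geq 0$, so putting the two together recovers (\ref{VI}). For the converse, substituting $x=x^{*}$ into the rewritten VI isolates $(\lambda-\lambda^{*})^{\rm T}Ax^{*}\geq 0$ for all $\lambda$, forcing $Ax^{*}=0$; then substituting $\lambda=\lambda^{*}$ yields the right saddle-point inequality, while the left one is automatic once $Ax^{*}=0$. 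Convexity from Assumption 1 then turns this saddle-point characterization into the statement that $x^{*}$ minimizes $F$ over $\{x:Ax=0\}$, i.e. solves (\ref{problem4}).

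\textbf{Main obstacle.} There is essentially no technical difficulty; the lemma is standard and is already credited to \cite{deng2017parallel}. The only items that deserve care are bookkeeping: correctly reading the first $n$ components of $\mathscr{F}$ as assembling into $-A^{\rm T}\lambda$ rather than acting component-wise, and noticing that the seemingly extra cross term $(\lambda-\lambda^{*})^{\rm T}Ax^{*}$ in the rewritten VI is harmless precisely because feasibility $Ax^{*}=0$ is itself extracted from the VI by the choice $x=x^{*}$.
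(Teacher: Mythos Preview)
Your proposal is correct and follows the standard route: unpack $\mathscr{F}(u^*)$ into the blocks $(-A^{\rm T}\lambda^*, Ax^*)$, then show that the VI is equivalent to the saddle-point conditions of Assumption~2, with primal feasibility $Ax^*=0$ extracted from the $\lambda$-freedom and dual optimality from the $x$-freedom. The paper itself does not supply a proof of this lemma at all; it simply cites \cite{deng2017parallel} and moves on, so there is nothing to compare against beyond noting that your argument is exactly the kind of derivation one finds in that reference.
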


	It will be seen in Section \ref{section4} that Lemma \ref{lemma1} is essential to complete the proof of optimality.
	
	
	
	Next a basic lemma for convergence analysis is established.
	
	\begin{lemma}\label{lemma3}
		The following inequality holds for all $k$:
		\begin{align}
			&F\left(x\right)-F\left(x^{k+1}\right)+\left(x-x^{k+1}\right)^{\rm T} \nonumber\\
			&\cdot \Bigg\{ -A^{\rm T}\lambda^{k+1}
			+\rho\left(4E^{\rm T}E-A^{\rm T}E\right)\left(x^{k+1}-x^{k}\right) \nonumber\\ 
			& \quad +\rho\left(2\epsilon_1E^{\rm T}E+2\epsilon_2 B^{\rm T}B\right) \left(x^{k+1}-x^{k}\right) \Bigg\}\nonumber\\
			&+\left(x-x^{k+1}\right)^{\rm T}\rho A^{\rm T}Ex^{k} \geqslant 0. 
		\end{align}
	\end{lemma}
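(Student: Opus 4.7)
The plan is to derive this inequality from the first-order optimality condition of the primal update (\ref{MADMM1}), together with convexity of each $f_i$ and the dual recursion (\ref{MADMM2}). Since $x_i^{k+1}$ minimizes the strictly convex objective inside the $\operatorname{argmin}$ in (\ref{MADMM1}), there exists a subgradient $g_i^{k+1}\in\partial f_i(x_i^{k+1})$ such that the remaining derivatives at $x_i^{k+1}$ sum to $-g_i^{k+1}$; the resulting scalar equality expresses $-g_i^{k+1}$ as a linear combination involving $\sum_{j\in P_i}\lambda_{ji}^{k}-\sum_{j\in S_i}\lambda_{ij}^{k}$, $\rho\sum_{j\in N_i}(x_i^{k+1}-x_j^{k})$, $2(1+\epsilon_1)\rho|P_i|(x_i^{k+1}-x_i^{k})$, $2\epsilon_2\rho|S_i|(x_i^{k+1}-x_i^{k})$, and $-\rho(\sum_{j\in P_i}x_j^{k}-|P_i|x_i^{k})$. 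Combining this with the subgradient bound $f_i(x_i)-f_i(x_i^{k+1})\geq g_i^{k+1}(x_i-x_i^{k+1})$ and summing over $i\in\mathcal{V}$, I would obtain a global inequality of the form $F(x)-F(x^{k+1}) + (x-x^{k+1})^{\rm T}[\cdots]\geq 0$, whose bracket must then be identified with the expression claimed in the lemma.

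The second step is to vectorize and substitute. I would rewrite (\ref{MADMM2}) in the compact form $\lambda^{k+1}=\lambda^{k}-\rho(Bx^{k+1}+Ex^{k})$, which gives $A^{\rm T}\lambda^{k}=A^{\rm T}\lambda^{k+1}+\rho A^{\rm T}Bx^{k+1}+\rho A^{\rm T}Ex^{k}$ and eliminates the unwanted $\lambda^{k}$ from the bracket in favor of $\lambda^{k+1}$. The neighbor-indexed sums would then be translated using the identities $B^{\rm T}B=\operatorname{diag}(|S_i|)$, $E^{\rm T}E=\operatorname{diag}(|P_i|)$, $A^{\rm T}A=D-W$ (the graph Laplacian), and the observation that $E^{\rm T}B$ and $B^{\rm T}E$ encode the predecessor and successor adjacency patterns, respectively. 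Collecting the resulting coefficients of $x^{k+1}$ and $x^{k}$ and grouping the common part as $(x^{k+1}-x^{k})$ would yield the target matrix prefactor $\rho(4E^{\rm T}E-A^{\rm T}E)+\rho(2\epsilon_1 E^{\rm T}E+2\epsilon_2 B^{\rm T}B)$, with the unabsorbable residue on $x^{k}$ appearing as the separate term $\rho A^{\rm T}Ex^{k}$ on the last line of the lemma.

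The main obstacle I anticipate is the careful algebraic bookkeeping. Because $B$ and $E$ are not symmetric and $B^{\rm T}E\neq E^{\rm T}B$ in general, one must be vigilant about which of these arises when translating sums indexed over $P_i$ versus $S_i$. The consensus penalty $\tfrac{\rho}{2}\sum_{j\in N_i}\|x_i-x_j^{k}\|^{2}$ contributes a Laplacian-like piece $\rho(Dx^{k+1}-Wx^{k})$ that has to be merged with the two proximal terms scaled by $|P_i|$ and $|S_i|$ and with the extra linear correction $-\rho(\sum_{j\in P_i}x_j^{k}-|P_i|x_i^{k})x_i$, a term inserted precisely to enforce the parallel updating and to make the final coefficients line up as claimed, without double-counting any contribution. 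Once every piece is in matrix form, arriving at the lemma's coefficients is a purely symbolic manipulation based on $A=B+E$ together with $A^{\rm T}A=B^{\rm T}B+B^{\rm T}E+E^{\rm T}B+E^{\rm T}E$.
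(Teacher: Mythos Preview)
Your proposal is correct and follows essentially the same route as the paper: first-order optimality of (\ref{MADMM1}) plus the subgradient inequality give a per-agent bound, the dual update (\ref{MADMM2}) replaces $\lambda^{k}$ by $\lambda^{k+1}$, and the sums over $P_i,S_i,N_i$ are then rewritten via the matrix identities $E^{\rm T}E=\operatorname{diag}(|P_i|)$, $B^{\rm T}B=\operatorname{diag}(|S_i|)$, $A=B+E$. The only cosmetic difference is that the paper substitutes the dual update at the per-agent level before vectorizing, whereas you plan to vectorize first and then substitute $\lambda^{k+1}=\lambda^{k}-\rho(Bx^{k+1}+Ex^{k})$; both orderings lead to the same algebra.
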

	
	\begin{proof}
		We define $g_{i}$ as follows,
		\begin{align}\label{g}
			g_{i}^{k}\left(x_{i}\right)
			=&\left(\sum_{j\in P_{i}} \lambda _{ji}^{k}
			- \sum_{j\in S_{i}} \lambda _{ij}^{k}\right) x_{i}
			+\frac{\rho}{2}\sum_{j\in N_{i}} \left\|x_{i}-x_{j}^{k} \right\|^{2}\nonumber\\
			&+\left(1+\epsilon_1\right)\rho |P_{i}| \left\|x_{i}-x_{i}^{k} \right\|^{2}
			+\epsilon_2\rho |S_{i}| \left\|x_{i}-x_{i}^{k} \right\|^{2}\nonumber\\
			&-\rho \left(\sum_{j\in P_{i}} x_{j}^{k} 
			- |P_{i}| x_{i}^{k}\right) x_{i} 
		\end{align}
		
		Then the gradient of $g_{i}^{k}(x_{i}^{k+1})$ is 
		\begin{flalign}\label{nablag}
			&\nabla g_{i}^{k}\left(x_{i}^{k+1}\right)\nonumber\\
			=& \left(\sum_{j \in P_{i}} \lambda_{ji}^{k}-\sum_{j \in S_{i}} \lambda_{i j}^{k} 
			\right) + \rho\left(|N_i|x_{i}^{k+1} -\sum_{j \in N_{i}}x_{j}^{k}\right) \nonumber\\
			& +\left(2+2\epsilon_1\right)\rho |P_{i}| \left(x_{i}^{k+1}-x_{i}^{k}\right) 
			+ 2\epsilon_2\rho |S_{i}| \left(x_{i}^{k+1}-x_{i}^{k}\right) \nonumber\\
			&-\rho\left(\sum_{j \in P_{i}} x_{j}^{k}-|P_{i}| x_{i}^{k}\right)\nonumber\\
			=&\left(\sum_{j \in P_{i}} \lambda_{ji}^{k+1}-\sum_{j \in S_{i}} \lambda_{ij}^{k+1} \right) \nonumber\\
			&+ \rho\Bigg\{\sum_{j\in P_{i}} \left(x_j^{k+1}-x_j^k\right) 
			+ |P_{i}|\left(x_i^{k+1}-x_i^k\right) \nonumber\\ 
			&\qquad+\left(2+2\epsilon_1\right) |P_{i}| \left(x_{i}^{k+1}-x_{i}^{k}\right) 
			+ 2\epsilon_2 |S_{i}| \left(x_{i}^{k+1}-x_{i}^{k}\right) \nonumber\\
			&\qquad-\left(\sum_{j \in P_{i}} x_{j}^{k}-|P_{i}| x_{i}^{k}\right) \Bigg\}
		\end{flalign}
		where (\ref{MADMM2}) and the relationship $(|N_i|x_i^{k+1}-\sum_{j \in N_{i}}x_{j}^{k}) - (|S_{i}|x_i^{k+1}-\sum_{j \in S_{i}}x_j^k)=|P_{i}|x_i^{k+1}-\sum_{j \in P_{i}}x_j^k$ has been utilized in (\ref{nablag}).
		Then denoting $h_{i}(x_{i}^{k+1})\in \partial f_{i}(x^{k+1})$, we have $h_{i}(x_{i}^{k+1})+\nabla g_{i}^{k}(x_{i}^{k+1})=0$ and $(x_{i}-x_{i}^{k+1})[h_{i}(x_{i}^{k+1})+\nabla g_{i}^{k}(x_{i}^{k+1})]=0$. By recalling the definiation of subgradient:
		$f_{i}(x_{i}) \geqslant f_{i}(x_{i}^{k+1})+(x_{i}-x_{i}^{k+1})^{\rm T}h_{i}(x_{i}^{k+1})$,
		the following inequality is obtained. 
		\begin{align}\label{16}
			f_{i}\left(x_{i}\right)-f_{i}\left(x_{i}^{k+1}\right)+\left(x_{i}-x_{i}^{k+1}\right)^{\rm T} \nabla g_{i}^{k}\left(x_{i}^{k+1}\right) \geqslant 0. 
		\end{align}
		
		By substituting $\nabla g_i^{k}(x_i^{k+1})$ in (\ref{16}), we obtain
		\begin{align}\label{vital}
			&f_{i}\left(x_{i}\right)-f_{i}\left(x_{i}^{k}\right) \nonumber\\
			&+\left(x_{i} - x_{i}^{k+1}\right) ^{\rm T}\left(\sum_{j \in P_{i}} \lambda_{ji}^{k+1}-\sum_{j \in S_{i}} \lambda_{ij}^{k+1}\right) \nonumber\\
			&+\rho\left(x_{i}-x_{i}^{k+1}\right)^{\rm T}\nonumber\\
			& \cdot \Bigg\{\sum_{j\in P_{i}} \left(x_j^{k+1}-x_j^k\right) 
			+|P_{i}|\left(x_i^{k+1}-x_i^k\right) \nonumber\\ 
			&\quad+\left(2+2\epsilon_1\right) |P_{i}| \left(x_{i}^{k+1}-x_{i}^{k}\right) + 2\epsilon_2 |S_{i}| \left(x_{i}^{k+1}-x_{i}^{k}\right) \nonumber\\
			&\quad-\left(\sum_{j \in P_{i}} x_{j}^{k}-|P_{i}| x_{i}^{k}\right) \Bigg\}\geqslant 0.
		\end{align}
		
		The following equations hold for all $k$:
		\begin{flalign}
			&\sum_{i=1}^{n}\left(x_{i}-x_{i}^{k+1}\right)^{\rm T}\left(\sum_{j \in P_{i}} \lambda_{ji}^{k+1}-\sum_{j \in S_{i}} \lambda_{i j}^{k+1}\right)\nonumber\\
			=&\sum_{i=1}^{n}\left(x_{i}-x_{i}^{k+1}\right)^{\rm T}
			\left(-[A]_{i}^{\rm T} \lambda^{k+1}\right)\nonumber\\
			=&\left(x-x^{k+1}\right)^{\rm T} \left(-A^{\rm T} \lambda^{k+1}\right), \label{relation1}
		\end{flalign}
		
		\begin{flalign}
			&\sum_{i=1}^{n}\left(x_{i}-x_{i}^{k+1}\right)^{\rm T}
			\Bigg \{\sum_{j\in P_{i}} \left(x_j^{k+1}-x_j^k\right) 
			+|P_{i}|\left(x_i^{k+1}-x_i^k \right) \Bigg \}\nonumber\\
			=&\left(x-x^{k+1}\right)^{\rm T}\left(-B^{\rm T}E +E^{\rm T} E \right) \left( x^{k+1} - x^{k} \right) \nonumber\\
			=&\left(x-x^{k+1}\right)^{\rm T}\left(2E^{\rm T} E-A^{\rm T} E\right)\left(x^{k+1}-x^{k}\right), \label{relation2}
		\end{flalign}

		\begin{align}
			&\sum_{i=1}^{n}\left(x_{i}-x_{i}^{k+1}\right)^{\rm T}|P_{i}|\left(x_{i}^{k+1}-x_{i}^{k}\right)\nonumber\\
			=&\left(x-x^{k+1}\right)^{\rm T}E^{\rm T}E\left(x^{k+1}-x^{k}\right)\label{relation3}
		\end{align}
		
		\begin{align}
			&\sum_{i=1}^{n}\left(x_{i}-x_{i}^{k+1}\right)^{\rm T}|S_{i}|\left(x_{i}^{k+1}-x_{i}^{k}\right)\nonumber\\
			=&\left(x-x^{k+1}\right)^{\rm T}B^{\rm T}B\left(x^{k+1}-x^{k}\right)\label{relation4}
		\end{align}
		
		\begin{align}		&\sum_{i=1}^{n}\left(x_{i}-x_{i}^{k+1}\right)^{\rm T}\left(\sum _{j\in P_{i}}x_{j}^{k}-|P_{i}|x_{i}^{k}\right)\nonumber\\	
			=&\left(x-x^{k+1}\right)\left(-B^{\rm T} E - E^{\rm T}E\right)x^{k} \nonumber\\
			=&\left(x-x^{k+1}\right)\left(-A^{\rm T} E\right)x^{k},\label{relation5}
		\end{align}
		where Eqns. (\ref{relation2}) and (\ref{relation5}) hold due to the fact that $B = A- E$. By substituting (\ref{relation1})-(\ref{relation4}) in (\ref{vital}) and summing over $1,2,...,n$, we can complete the proof.
	\end{proof}

	The following theorem illustrates the main results obtained with our presented algorithm. 
	
	\begin{theorem}
		The state $x_{i}$ generated by Algorithm 2 in each agent $i$ converges to the optimal solution of Problem (\ref{problem4}), and the global cost function converges to the optimal value with $k \rightarrow \infty$. i.e., 
		\begin{align}
			&\lim _{k \rightarrow \infty}x_{i}^{k}=\bar{x}^{*}, i=1, 2,... n, 
		\end{align}
		where $\bar{x}^{*}$ and $F(x^*)$ are the optimal solution and the optimal value, respectively. 
	\end{theorem}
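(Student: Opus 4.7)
The plan is to establish convergence via a Lyapunov-type function of the form
\[ V^{k} \;=\; \tfrac{1}{\rho}\|\lambda^{k}-\lambda^{*}\|^{2} \;+\; \rho\,(x^{k}-x^{*})^{\rm T}M(x^{k}-x^{*}), \]
where $M$ is a symmetric positive semi-definite matrix assembled from $E^{\rm T}E$ and $B^{\rm T}B$. The goal is to show that $V^{k}$ is non-increasing with a residual that strictly controls $\|x^{k+1}-x^{k}\|$ and $\|\lambda^{k+1}-\lambda^{k}\|$, and then to extract optimality from Lemma \ref{lemma1}.

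First I would specialize Lemma \ref{lemma3} to $x=x^{*}$; since $Ax^{*}=0$, the resulting bound involves only $F(x^{k+1})-F(x^{*})$ and linear-in-$(x^{k+1}-x^{*})$ corrections. Combining this with the saddle-point inequality
\[ F(x^{k+1})-F(x^{*})+(x^{k+1}-x^{*})^{\rm T}(-A^{\rm T}\lambda^{*})\;\ge\;0, \]
supplied by Assumption 2 and Lemma \ref{lemma1}, cancels the cost-function terms and leaves an inequality in which the primal-dual gap appears only as $(x^{k+1}-x^{*})^{\rm T}A^{\rm T}(\lambda^{k+1}-\lambda^{*})$, together with the $\epsilon_{1},\epsilon_{2}$-penalty corrections and the cross term $\rho A^{\rm T}Ex^{k}$. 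Rewriting the dual update (\ref{MADMM2}) in compact form as $\lambda^{k+1}-\lambda^{k}=-\rho(Bx^{k+1}+Ex^{k})$ and using $A=B+E$ with $Ax^{*}=0$ then lets me trade $A^{\rm T}(\lambda^{k+1}-\lambda^{*})$ and $A^{\rm T}Ex^{k}$ for expressions involving only $\lambda^{k+1}-\lambda^{k}$ and $x^{k+1}-x^{k}$.

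The next step is to apply the polarization identity $2(a-b)^{\rm T}P(b-c)=\|a-c\|_{P}^{2}-\|a-b\|_{P}^{2}-\|b-c\|_{P}^{2}$ (valid for symmetric $P$) to each quadratic form produced above. This telescopes the combined inequality into $V^{k+1}\le V^{k}-R^{k}$, where $R^{k}$ is a non-negative sum of terms proportional to $\|x^{k+1}-x^{k}\|_{E^{\rm T}E}^{2}$, $\|x^{k+1}-x^{k}\|_{B^{\rm T}B}^{2}$ and $\|\lambda^{k+1}-\lambda^{k}\|^{2}$, with positive coefficients traceable to $\epsilon_{1},\epsilon_{2}>0$. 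Summing over $k$ then yields boundedness of $\{x^{k},\lambda^{k}\}$ and $\sum_{k}R^{k}<\infty$, so $x^{k+1}-x^{k}\to 0$ and $\lambda^{k+1}-\lambda^{k}\to 0$. The dual update then forces $Bx^{k}+Ex^{k}=Ax^{k}\to 0$, i.e.\ asymptotic consensus. Any subsequential limit $(\hat x,\hat\lambda)$ of $(x^{k},\lambda^{k})$ satisfies the variational inequality of Lemma \ref{lemma1} by passage to the limit, hence is a saddle point; monotonicity of $V^{k}$ upgrades subsequential to full-sequence convergence, giving $x_{i}^{k}\to \bar x^{*}$ for each $i$, and continuity of $F$ yields $F(x^{k})\to F(x^{*})$.

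The hard part will be the matrix bookkeeping behind the telescoping. The dual update is ``stale'' in that it uses $Bx^{k+1}+Ex^{k}$ rather than $Ax^{k+1}$, so the cross term $\rho A^{\rm T}Ex^{k}$ in Lemma \ref{lemma3} is genuinely indefinite. The extra penalties $\epsilon_{1}\rho|P_{i}|\|x_{i}-x_{i}^{k}\|^{2}$ and $\epsilon_{2}\rho|S_{i}|\|x_{i}-x_{i}^{k}\|^{2}$ in (\ref{MADMM1}) are inserted precisely to supply the $E^{\rm T}E$- and $B^{\rm T}B$-slack needed to dominate this indefinite contribution, but identifying the linear combination of $E^{\rm T}E$ and $B^{\rm T}B$ that simultaneously makes $M$ positive semi-definite and makes $R^{k}$ a sum of squares is the delicate algebraic step on which the whole argument turns.
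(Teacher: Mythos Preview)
Your overall strategy---specialize Lemma~\ref{lemma3} at $x=x^{*}$, combine with the saddle-point inequality, telescope via polarization, then pass to the limit in the variational inequality of Lemma~\ref{lemma1}---is exactly what the paper does. The gap is in the \emph{shape} of the Lyapunov function you posit. A block-diagonal $V^{k}=\tfrac{1}{\rho}\|\lambda^{k}-\lambda^{*}\|^{2}+\rho\|x^{k}-x^{*}\|_{M}^{2}$ will not telescope cleanly here: the dual update $\lambda^{k+1}-\lambda^{k}=-\rho(Bx^{k+1}+Ex^{k})$ produces $(Bx^{k+1}+Ex^{k})^{\rm T}(\lambda^{k+1}-\lambda^{*})$ under polarization, whereas Lemma~\ref{lemma3} supplies $(Ax^{k+1})^{\rm T}(\lambda^{k+1}-\lambda^{*})=(Bx^{k+1}+Ex^{k+1})^{\rm T}(\lambda^{k+1}-\lambda^{*})$. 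The mismatch $(E(x^{k+1}-x^{k}))^{\rm T}(\lambda^{k+1}-\lambda^{*})$ is a genuine primal--dual cross term that no choice of $M$ alone can absorb.

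The paper's remedy is to build the cross term into $V^{k}$ itself by using the \emph{shifted} dual quantity $\lambda^{k}-\lambda^{*}-\rho Ex^{k}$: since $\lambda^{k+1}-\rho Ex^{k+1}=(\lambda^{k}-\rho Ex^{k})-\rho Ax^{k+1}$, this shifted variable obeys the ``clean'' update driven by $Ax^{k+1}$, and the telescoping goes through with
\[
V^{k}=\tfrac{1}{2\rho}\|\lambda^{k}-\lambda^{*}-\rho Ex^{k}\|^{2}+(2+\epsilon_{1})\rho\|E(x^{k}-x^{*})\|^{2}+\epsilon_{2}\rho\|B(x^{k}-x^{*})\|^{2}.
\]
The resulting residual is not a pure increment norm either: one of its pieces is $\tfrac{\rho}{2}\|2E(x^{k+1}-x^{k})-Ax^{k+1}\|^{2}$, mixing the primal increment with the feasibility residual $Ax^{k+1}$. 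Together with the $\epsilon_{1}\|E(x^{k+1}-x^{k})\|^{2}$ and $\epsilon_{2}\|B(x^{k+1}-x^{k})\|^{2}$ terms this is enough to force $x^{k+1}-x^{k}\to 0$ and $Ax^{k+1}\to 0$, after which your endgame (limit points solve the VI, monotonicity of $V^{k}$ upgrades to full convergence) is exactly right.
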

	
	\begin{proof}
		From (\ref{MADMM2}) and $Ax^{*}=0$, $\lambda^{k+1}=\lambda^{k}-\rho(A-E) x^{k+1}-\rho E x^{k}$ can be derived. By setting $x = x^{*}$, it follows that 
		
		\begin{flalign}\label{relation6}
			&\left(x^{k+1}\right)^{\rm T}A^{\rm T}\left(\lambda^{k+1}-\lambda^{*}\right) \nonumber\\
			=&\frac{1}{2\rho}\left(\left\| \lambda^{k}-\lambda^{*}-\rho Ex^{k} \right\|^{2}-\left\| \lambda^{k+1}-\lambda^{*}-\rho Ex^{k+1} \right\|^{2}\right)\nonumber\\
			&-\frac{\rho}{2}\left\|Ax^{k+1}\right\|^{2} + \rho\left(x^{k+1}\right)^{\rm T}A^{\rm T}Ex^{k+1}.
		\end{flalign}
		
		\begin{flalign}\label{relation7}
			&4\rho\left(x^* - x^{k+1}\right)^{\rm T}E^{\rm T}E\left(x^{k+1}-x^{k}\right)\nonumber\\
			=&-2\rho \left\| E\left(x^{k+1}-x^{k}\right) \right\|^{2} \nonumber\\
			& +2\rho \left(\left\| E\left(x^{k}-x^{*}\right) \right\|^{2}-\left\| E\left( x^{k+1}-x^{*}\right) \right\|^{2}\right), 
		\end{flalign}
		
		\begin{flalign}\label{relation8}
			&\rho\left(x^* - x^{k+1}\right)^{\rm T} 
			\left(2\epsilon_1 E^{\rm T}E +2\epsilon_2B^{\rm T}B \right) \left(x^{k+1}-x^{k}\right)\nonumber\\
			=&	-\epsilon_1\rho \left\| E\left(x^{k+1}-x^{k}\right) \right\|^{2}-\epsilon_2\rho \left\| B\left(x^{k+1}-x^{k}\right) \right\|^{2}\nonumber\\	
			&+ \epsilon_1\rho \left(\left\| E\left(x^{k}-x^{*}\right) \right\|^{2}-\left\| E\left(x^{k+1}-x^{*}\right) \right\|^{2}\right)\nonumber\\
			&+ \epsilon_2\rho \left(\left\| B\left(x^{k}-x^{*}\right) \right\|^{2}-\left\| B\left(x^{k+1}-x^{*}\right) \right\|^{2}\right),
		\end{flalign}
		
		\begin{flalign}\label{relation9}
			&2\rho\left(x^{k+1}\right)^{\rm T}A^{\rm T}E\left(x^{k+1}-x^{k}\right) \nonumber\\
			=&-\frac{\rho}{2}\left\|2E\left(x^{k+1}-x^{k}\right)-Ax^{k+1}\right\|^{2}\nonumber\\ 
			&+\frac{\rho}{2}\left\|Ax^{k+1}\right\|^{2} +2\rho\left\|E\left(x^{k+1}-x^{k}\right)\right\|^{2}.
		\end{flalign}

		Then the following relation holds:
		\begin{align} \label{combining1}
			&\left(x^{k+1}\right)^{\rm T}A^{\rm T}\left(\lambda^{k+1}-\lambda^{*}\right)\nonumber\\ &+\rho\left(x^{*}-x^{k+1}\right)^{\rm T}\left(4E^{\rm T}E-A^{\rm T}E\right) \left(x^{k+1}-x^{k}\right) \nonumber\\
			&+\rho\left(x^{*}-x^{k+1}\right)^{\rm T}\left(\epsilon_1 E^{\rm T}E + \epsilon_2 B^{\rm T}B\right) \left(x^{k+1}-x^{k}\right) \nonumber\\
			&+\rho\left(x^{k+1}\right)^{\rm T}\left(-A^{\rm T}E\right) x^{k}\nonumber\\
			=&-\frac{\rho}{2} \left\| 2E\left( x^{k+1}-x^{k}\right)-Ax^{k+1} \right\|^2 \nonumber\\
			&-\epsilon_1\rho \left\| E\left(x^{k+1}-x^{k}\right) \right\|^{2}-\epsilon_2\rho \left\| B\left(x^{k+1}-x^{k}\right) \right\|^{2}\nonumber\\
			&+\frac{1}{2\rho} \left( \left\| \lambda^{k}-\lambda^{*}-\rho Ex^{k} \right\|^2- 
			\left\| \lambda^{k+1}-\lambda^*-\rho Ex^{k+1} \right\|^2 \right) \nonumber\\
			&+\left(2+\epsilon_1\right)\rho\left(\left\| E\left(x^k-x^*\right) \right\|^2 - \left\| E\left(x^{k+1}-x^{*}\right) \right\|^2 \right)\nonumber\\
			&+ \epsilon_2\rho \left(\left\| B\left(x^{k}-x^{*}\right) \right\|^{2}-\left\| B\left(x^{k+1}-x^{*}\right) \right\|^{2}\right).
		\end{align}
		
		By rearranging the terms in Lemma \ref{lemma3}, the following inequality is established:
		\begin{align}\label{combining2}
			&\left(x^{k+1}\right)^{\rm T}A^{\rm T}\left(\lambda^{k+1}-\lambda^{*}\right)\nonumber\\ &+\rho\left(x^{*}-x^{k+1}\right)^{\rm T}\left(4E^{\rm T}E-A^{\rm T}E\right) \left(x^{k+1}-x^{k}\right) \nonumber\\
			&+\rho\left(x^{*}-x^{k+1}\right)^{\rm T}\left(2\epsilon_1 E^{\rm T}E + 2\epsilon_2 B^{\rm T}B\right) \left(x^{k+1}-x^{k}\right) \nonumber\\
			&+\rho\left(x^{k+1}\right)^{\rm T}\left(-A^{\rm T}E\right) x^{k}\nonumber\\
			\geqslant &F\left(x^{k+1}\right)-\left(x^{k+1}\right)^{\rm T}A^{\rm T}\lambda^{*}-F\left(x^{*}\right) \geqslant 0
		\end{align}
		where in (\ref{combining2}) the relation of the saddle point of Lagrangian function relation $F(x^{*})-\lambda^{\rm T} A x^{*} \leq F(x^{*})-(\lambda^{*})^{\rm T} A x^{*}$ has been used. 
		
		Let $V^{k}=\frac{1}{2\rho} \| \lambda^{k}-\lambda^{*}-Ex^{k} \|^{2}+(2+\epsilon_1)\rho \| Ex^{k}-Ex^{*} \|^{2} + \epsilon_2\rho \| Bx^{k}-Bx^{*} \|^{2}$. Clearly, $V^{k}$ is a nonnegative term. By combining (\ref{combining1}) and (\ref{combining2}), we obtain
		\begin{align}\label{combining}
			&V^{k}-V^{k+1} -\frac{\rho}{2} \left\| 2E\left( x^{k+1}-x^{k} \right)-Ax^{k+1} \right\| ^2 \nonumber\\
			&-\epsilon_1\rho \left\| E\left(x^{k+1}-x^{k}\right) \right\|^{2} - \epsilon_2\rho \left\| B\left(x^{k+1}-x^{k}\right) \right\|^{2}\geqslant 0 .
		\end{align}
		
		Then rearranging and summing preceding relation over $0, 1,..., s$ yields
		\begin{align}\label{0k}
			V^{0} - V^{s} \geqslant& \sum_{k=0}^s
			\left(\frac{\rho}{2}\left\|2E\left(x^{k+1}-x^{k}\right)-Ax^{k+1} \right\|^2 \right.\nonumber\\
			&\left. +\epsilon_1\rho \left\| E\left(x^{k+1}-x^{k}\right) \right\|^{2} + \epsilon_2\rho \left\| B\left(x^{k+1}-x^{k}\right) \right\|^{2}\right) \nonumber\\
			\geqslant& 0.
		\end{align}
		
		Inequality (\ref{0k}) implies that $V^{k}$ is bounded since $V_{0}$ is bounded and the left side of (\ref{0k}) is nonnegative. Thus the following equation holds for an arbitrary constant $\rho$, $\epsilon_1$ and $\epsilon_2$.
		\begin{align}\label{convergencetothesame}
			\lim _{k \rightarrow \infty}&
			\left(\frac{\rho}{2}\left\|2E\left(x^{k+1}-x^{k}\right)-Ax^{k+1} \right\|^2 \right.\nonumber\\
			&\left. +\epsilon_1\rho \left\| E\left(x^{k+1}-x^{k}\right) \right\|^{2} + \epsilon_2\rho \left\| B\left(x^{k+1}-x^{k}\right) \right\|^{2}\right) =0.
		\end{align}

		Eqn. (\ref{convergencetothesame}) implies that states of agents converge to the same point when the number of iterations goes to infinity. We now show that this point is the optimal point of the mentioned optimization Problem (\ref{problem4}).
		
		It is trivial to obtain that for $k \rightarrow \infty$, $x_{1}^{k+1}=x_{2}^{k+1}=\ldots=x_{n}^{k+1}$ and then $\lambda^{k+1}=\lambda^{k} $ from (\ref{MADMM2}) and (\ref{convergencetothesame}). 
		By summing up (\ref{vital}) over $1,2,...,n$, we obtain that
		\begin{align}\label{sumVI}
			&F\left(x\right)-F\left(x^{k+1}\right)+ \sum_{i=1}^{n}\left(x_{i}-x_{i}^{k+1}\right)^{\rm T} \nonumber\\
			&\cdot \Bigg\{-[A]_{i}\lambda 
			-\rho\left(\sum_{j \in P_{i}} x_{j}^{k}-|P_{i}| x_{i}^{k}\right)\Bigg\}
			\geqslant 0.
		\end{align}
		
		Notice that the optimal solution vector ${x}^*$ satisfies $Ax^{*}=0$, which implies the optimal solution $\bar{x}^*$ is the consensual value for all $x_{i}$, i.e., $(x_{1}^{*}, x_{2}^{*},..., x_{n}^{*})^{\rm T}= (\bar{x}^*,\bar{x}^*,...,\bar{x}^*)^{\rm T}$.
		For relation (\ref{sumVI}), let $x_{1}=x_{2}=...=x_{n}=\hat{x}$ and denote $\hat{u}=[\hat{x}\textbf{1}^{\rm T}, \lambda^{\rm T}]^{\rm T}\in \mathbb{R}^{n+m}$, $u^{k+1}=[{x^{k+1}}^{\rm T}, {\lambda^{k+1}}^{\rm T}]^{\rm T}\in \mathbb{R}^{n+m}$. Then for arbitrary $\hat{x}\in \mathbb{R}$, inequality (\ref{sumVI}) can be written as
		
		\begin{align}\label{39}
			&F\left(x\right)-F\left(x^{k+1}\right) + \left(\hat{u}-u^{k+1}\right)^{\rm T}\mathscr{F}\left(u^{k+1}\right)\nonumber\\ 
			&-\rho\left(\hat{x}-x^{k+1}\right)^{\rm T} \left(\sum_{i=1}^{n}|P_{i}|-\sum_{i=1}^{n}|P_{i}|\right)x \geqslant 0.
		\end{align}
		
		Thus the following relation is obtained
		\begin{align}\label{40}
			F\left(x\right)-F\left(x^{k+1}\right)+ \left(\hat{u}-u^{k+1}\right)^{\rm T}\mathscr{F}\left(u^{k+1}\right)\geqslant 0.
		\end{align}	
		
		Relation (\ref{40}) indicates that for a sufficiently large $k$, the vector $u^{k+1}$ is the solution to (\ref{VI}). Then by recalling Lemma \ref{lemma1}, we have that the solution of the preceding VI \emph{is equivalent to} the solution of our primary optimization problem (\ref{problem4}). 
	\end{proof}
	
	\begin{remark} 
		In \cite{wei2012distributed,yan2020parallel}, Only the illustration that the Lagrangian function or the global cost function constructed on the \emph{ergodic average} sequence $\{ \bar{x}^{s}=\frac{1}{s} \sum_{k=0}^{s-1}x^{k+1}\}$ converges to the optimal value is mentioned. However, the result that the state of each agent $\{x_i^k\}$ converges to the optimal state $\bar{x}^*$ is not developed. In this paper, by introducing the extra convex term $\epsilon_1\rho |P_{i}| \left\|x_{i}-x_{i}^{k} \right\|^{2} +\epsilon_2\rho |S_{i}| \left\|x_{i}-x_{i}^{k} \right\|^{2}$ and the variational inequality method, this issue is addressed, i.e., agents will reach a optimal consensual value according to \emph{Theorem 1}. 
	\end{remark} 
	
	The next theorem reveals the convergence rate for our algorithm.
	
	\begin{theorem}	
		Algorithm 2 converges at a rate of $O(\frac{1}{k})$. 
	\end{theorem}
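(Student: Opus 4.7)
The plan is to establish the $O(1/k)$ rate by passing to an ergodic-average argument based on the variational inequality characterization in Lemma \ref{lemma1}. The core engine is already present in the proof of Theorem 1: the chain of identities leading to (\ref{combining}) produces, for each $k$, a per-iteration inequality of the telescoping form
\begin{align*}
F(x) - F(x^{k+1}) + (u - u^{k+1})^{\rm T}\mathscr{F}(u^{k+1}) \;\geq\; V^{k+1} - V^k + R^k,
\end{align*}
where $R^k := \tfrac{\rho}{2}\|2E(x^{k+1}-x^k)-Ax^{k+1}\|^2 + \epsilon_1\rho\|E(x^{k+1}-x^k)\|^2 + \epsilon_2\rho\|B(x^{k+1}-x^k)\|^2 \geq 0$. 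The only new bookkeeping compared with the proof of Theorem 1 is to re-anchor the squares defining $V^k$ at a generic reference point $u=(x^{\rm T},\lambda^{\rm T})^{\rm T}$ rather than the saddle point $u^*$; this is exactly what Lemma \ref{lemma1} is designed to accommodate.

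First I would redo the derivation of (\ref{combining1})--(\ref{combining2}) with $x$ and $\lambda$ in place of $x^*$ and $\lambda^*$ throughout the identities (\ref{relation6})--(\ref{relation9}). These are purely algebraic, so the substitution is clean. The one place that previously invoked the saddle-point property --- the inequality $F(x^*) - \lambda^{\rm T}Ax^* \leq F(x^*) - (\lambda^*)^{\rm T}Ax^*$ used just after (\ref{combining2}) --- is replaced by the skew-symmetry of the affine operator $\mathscr{F}$. Indeed, because the Jacobian of $\mathscr{F}$ is block skew-symmetric with off-diagonal blocks $\pm A$, one has
\begin{align*}
(u - u^{k+1})^{\rm T}\bigl(\mathscr{F}(u) - \mathscr{F}(u^{k+1})\bigr) = 0,
\end{align*}
which lets me freely replace $\mathscr{F}(u^{k+1})$ by $\mathscr{F}(u)$ in the preceding inequality.

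Next I would sum over $k=0,1,\dots,s-1$: the $V^k$-terms telescope to $V^s - V^0 \geq -V^0$ and the $R^k$-terms, being nonnegative, can be discarded, yielding
\begin{align*}
\sum_{k=0}^{s-1}\Bigl[F(x) - F(x^{k+1}) + (u - u^{k+1})^{\rm T}\mathscr{F}(u)\Bigr] \;\geq\; -V^0.
\end{align*}
Define the ergodic averages $\bar{x}^s = \tfrac{1}{s}\sum_{k=0}^{s-1}x^{k+1}$, $\bar{\lambda}^s = \tfrac{1}{s}\sum_{k=0}^{s-1}\lambda^{k+1}$, and $\bar{u}^s = ((\bar{x}^s)^{\rm T},(\bar{\lambda}^s)^{\rm T})^{\rm T}$. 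Convexity of each $f_i$ (Assumption 1) and Jensen's inequality yield $F(\bar{x}^s) \leq \tfrac{1}{s}\sum_{k=0}^{s-1}F(x^{k+1})$, while the linearity in $u^{k+1}$ of the second term delivers $(\bar{u}^s - u)^{\rm T}\mathscr{F}(u)$ upon dividing by $s$. Rearranging gives
\begin{align*}
F(\bar{x}^s) - F(x) + (\bar{u}^s - u)^{\rm T}\mathscr{F}(u) \;\leq\; \frac{V^0}{s}
\end{align*}
for every admissible $u$, which by Lemma \ref{lemma1} says that $\bar{u}^s$ satisfies the characterizing variational inequality up to an additive error $V^0/s$; this is the claimed $O(1/k)$ rate.

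The main obstacle I anticipate is the careful re-run of (\ref{relation6})--(\ref{relation9}) with a generic anchor $u$ instead of $u^*$: one must confirm that every quadratic $\|\cdot-x^*\|^2$ or $\|\cdot-\lambda^*\|^2$ that produced a telescoping contribution to $V^k$ still does so after re-anchoring, that no step silently exploited $Ax^*=0$, and that the replacement of the saddle-point inequality by the skew-symmetry of $\mathscr{F}$ is the only substantive modification required. Once that bookkeeping is complete, the summation, Jensen's inequality, and division by $s$ are routine.
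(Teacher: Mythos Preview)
Your proposal is correct and follows the standard He--Yuan style variational-inequality framework, but it takes a genuinely more general route than the paper. The paper's own proof simply specializes Lemma~\ref{lemma3} at $x=x^{*}$, substitutes the same algebraic identities (\ref{relation6})--(\ref{relation9}) already derived in the proof of Theorem~1, and reads off a telescoping inequality of the form $F(x^{*})-F(x^{k+1})+(\overline V^{k}-\overline V^{k+1})-R^{k}\ge 0$. Summing over $k=0,\dots,s-1$, dropping $R^{k}\ge 0$ and $\overline V^{s}\ge 0$, and applying Jensen to $F$ gives directly $F(\bar x^{s})-F(x^{*})\le \overline V^{0}/s$. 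No re-anchoring, no skew-symmetry of $\mathscr{F}$, and no dual averaging are used; the paper proves only the ergodic \emph{objective-gap} rate, not the full VI residual bound you target.

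Your approach buys a strictly stronger conclusion (an $O(1/s)$ bound on the primal--dual gap $F(\bar x^{s})-F(x)+(\bar u^{s}-u)^{\rm T}\mathscr{F}(u)$ uniformly in $u$), at the cost of the extra bookkeeping you yourself flagged. Your caution about steps that ``silently exploited $Ax^{*}=0$'' is well placed: the paper does use it, e.g.\ to kill $(x^{*})^{\rm T}A^{\rm T}E\,x^{k}$ and $(x^{*})^{\rm T}A^{\rm T}E(x^{k+1}-x^{k})$ when passing from Lemma~\ref{lemma3} to (\ref{combining1}). With a generic anchor $x$ those cross-terms survive and must be absorbed into the $(\lambda-\lambda^{k+1})^{\rm T}Ax^{k+1}$ piece of $(u-u^{k+1})^{\rm T}\mathscr{F}(u^{k+1})$ together with the re-anchored version of (\ref{relation6}); this is routine but must be done carefully. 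If you only need what the theorem actually claims, the paper's specialization to $u^{*}$ is shorter.
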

	
	\begin{proof}	
		By adding (\ref{relation6})-(\ref{relation9}) into Lemma \ref{lemma3}, we obtain
		\begin{align}+
			&F\left(x^{*}\right)-F\left(x^{k+1}\right)\nonumber\\
			&-\frac{\rho}{2} \left\| 2E\left( x^{k+1}-x^{k}\right)-Ax^{k+1} \right\|^2 \nonumber\\
			&-\epsilon_1\rho \left\| E\left(x^{k+1}-x^{k}\right) \right\|^{2}-\epsilon_2\rho \left\| B\left(x^{k+1}-x^{k}\right) \right\|^{2}\nonumber\\
			&+\frac{1}{2\rho} \left( \left\| \lambda^{k}-\lambda^{*}-\rho Ex^{k} \right\|^2- 
			\left\| \lambda^{k+1}-\lambda^*-\rho Ex^{k+1} \right\|^2 \right) \nonumber\\
			&+\left(2+\epsilon_1\right)\rho\left(\left\| Ex^k-Ex^* \right\|^2 - \left\| Ex^{k+1}-Ex^{*} \right\|^2 \right)\nonumber\\
			&+ \epsilon_2\rho \left(\left\| Bx^{k}-Bx^{*} \right\|^{2}-\left\| Bx^{k+1}-Bx^{*} \right\|^{2}\right) 
			\geqslant 0.
		\end{align}
		
		Denote $\overline{V}^{k}=\frac{1}{2\rho} \| \lambda^{k}-Ex^{k} \|^{2}+(2+\epsilon_1)\rho \| Ex^{k}-Ex^{*} \|^{2} + \epsilon_2\rho \| Bx^{k}-Bx^{*} \|^{2}$, summing up from $0$ to $s-1$ yields
		\begin{align}\label{sum}
			&\overline{V}^{0} - \overline{V}^{s} +sF\left(x^{*}\right) - \sum_{k=0}^{s-1}F\left(x^{k+1}\right)\nonumber\\
			&-\sum_{k=0}^{s-1} \left(\frac{\rho}{2}\left\| 2E \left( x^{k+1}-x^{k} \right)-Ax^{k+1} \right\|^{2} \right. \nonumber\\
			&\left.+\epsilon_1\rho \left\| E\left(x^{k+1}-x^{k}\right) \right\|^{2} + \epsilon_2\rho \left\| B\left(x^{k+1} +x^{k}\right) \right\|^{2}\right) \geqslant 0.
		\end{align}
		
		Let $\overline{x}^{s}=\frac{1}{s} \sum_{k=0}^{s-1}x^{k+1}$.
		Since $F(x)$ is convex, we have $\sum_{k=0}^{s-1}F(x^{k+1}) \geqslant sF(\overline{x}^{k})$. Then we obtain 
		\begin{align}\label{44}
			sF\left(x^{*}\right)&-sF\left( \overline{x}^{s}\right) +\overline{V}^{0}\nonumber\\
			\geqslant& \overline{V}^{s} -\sum_{k=0}^{s-1} \left( \frac{\rho}{2}\left\| 2E \left( x^{k+1}-x^{k} \right)-Ax^{k+1} \right\|^{2} \right. \nonumber\\
			&\left.+\epsilon_1\rho \left\| E\left(x^{k+1}-x^{k}\right) \right\|^{2} + \epsilon_2\rho \left\| B\left(x^{k+1} +x^{k}\right) \right\|^{2}\right) \nonumber\\ 
			\geqslant& 0.
		\end{align}
		
		Finally by the re-arrangement of (\ref{44}), we obtain
		\begin{align}
			&F\left( \overline{x}^{s}\right)-F\left(x^{*}\right) \leqslant \frac{1}{s}\overline{V}^{0}.
		\end{align}
		
		We complete the proof.
	\end{proof}

	\section{Simulation}\label{section5}
	
	A distributed sensing problem is considered with a network of 9 nodes as shown in Fig. \ref{fig4}. For an unknown signal (using one-dimensional signal for convenience) $x\in \mathbb{R}$, each agent $i$ measures $x$ by using $y_i=M_ix+e_i$, where $y\in \mathbb{R}$ and $M_i\in \mathbb{R}$ are measured data following the standard Gaussian distribution $\mathcal{N}(0, 1)$. Then a consensus least squares problem is to be solved is as follows,
	\begin{flalign}
		\underset{x\in \mathbb{R}}{\operatorname{min}}
		\quad F\left(x\right)=\frac{1}{n} \sum_{i=1}^{n} \frac{1}{2} \left\| M_{i} x-y_{i} \right\|_{2}^{2}.
	\end{flalign}	
	
	We use $\frac{\|x^k- x^*\|_2}{\|x^*\|_2}$ to record the residual.
	Both prime and dual variables are initialized by zero. Penalty parameter $\rho$ is chosen to be 1 and constants $\epsilon_1$ and $\epsilon_2$ are 0. For comparison, two classical distributed optimization algorithms, Proximal Jacobian ADMM (PJADMM) \cite{deng2017parallel} and Distributed subgradient method (DSM) \cite{nedic2009distributed}, are also considered. PJADMM is an alternative distributed parallel ADMM-based algorithm, while DSM is one of the most well-known subgradient-based algorithms.
	\begin{figure}[H]
		\centering
		\includegraphics[width=3.5cm]{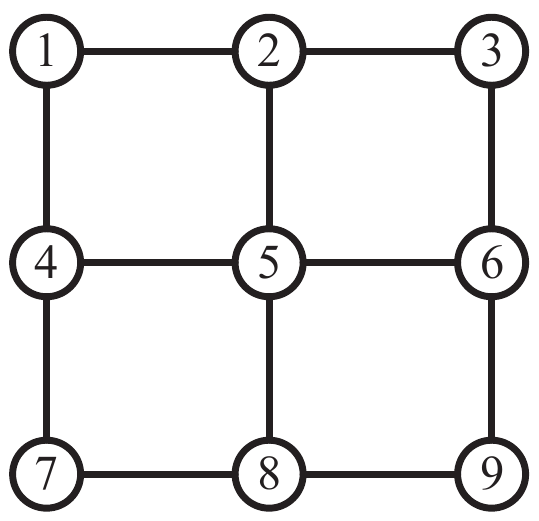}
		\centering
		\caption{A network of 9 nodes}
		\label{fig4}
	\end{figure}
	
	\subsection{PJADMM}
	We reformulate the typical form of the algorithm in \cite{deng2017parallel} as follows, 
	\begin{align}
		x_{i}^{k+1} = & \underset{x_{i}}{\operatorname{argmin}} \Bigg\{f_{i}\left(x_{i}\right)+\frac{1}{2}\left\|x_{i}-x_{i}^{k}\right\|^{2} \nonumber\\
		&+\sum_{j \in {N}_{i}}\left[\left(\lambda_{ij}^{k}\right)^{\rm T} x_{i}+\frac{\rho}{2}\left\|x_{i}-x_{j}^{k}\right\|^{2}\right]\Bigg\}, \nonumber\\
		\lambda_{ij}^{k+1} = & \lambda_{ij}^{k}-\rho\left(x_i^{k+1} - x_j^{k+1}\right),
	\end{align}	
	and the penalty parameter is set as $\rho = 1$ to match the fore-mentioned denotation.
	\subsection{DSM algorithm}
	A well-known algorithm named distributed subgradient method (DSM) \cite{boyd2010distributed} is shown as follows,
	\begin{align}
		x_{i}^{k+1}=\sum_{j=1}^{n} a_{j}^{i}\left(k\right) x_{j}^k-\alpha_{i}\left(k\right) d_{i}(x_i^k),
	\end{align}
	where the coefficients $a_{j}^{i}$ denote the weights. $\alpha_i(k)$ is the diminishing stepsize, which is chosen as $\alpha(k)=1/k^{1/2}$ in simulation. $d_i(x_i^k)$ is the subgradient of $f_{i}(x)$ at $x=x_i^k$.
	
	\subsection{Simulation Results}
	
	All the simulations are conducted in MATLAB R2018b with the aid of the convex optimization toolbox (CVX) developed by Stanford. 
	The simulation results are depicted in Figs. \ref{fig5}-\ref{fig8}.

	Fig. \ref{fig5} depicts that original Problem (\ref{problem1}) goes to the optimal value $F(x^*)=0.3014$ with Algorithm 2. The state updating records of $x_{i}^k, i=1, 2,..., 9$ are given in Fig. \ref{fig6}, which shows that the states reach a consensual optimal value at $x^*=-0.3890$. Fig. \ref{fig7} shows that our proposed algorithm achieves good performance and the convergence rate is greater than PJADMM and DSM algorithms. Fig. \ref{fig8} illustrates the difference by adjusting parameter $\epsilon = \epsilon_1 = \epsilon_2$ and shows that higher $\epsilon$ results in worse performance.
	\begin{figure}[H]
		\centering
		\includegraphics[width=5cm, height=4cm]{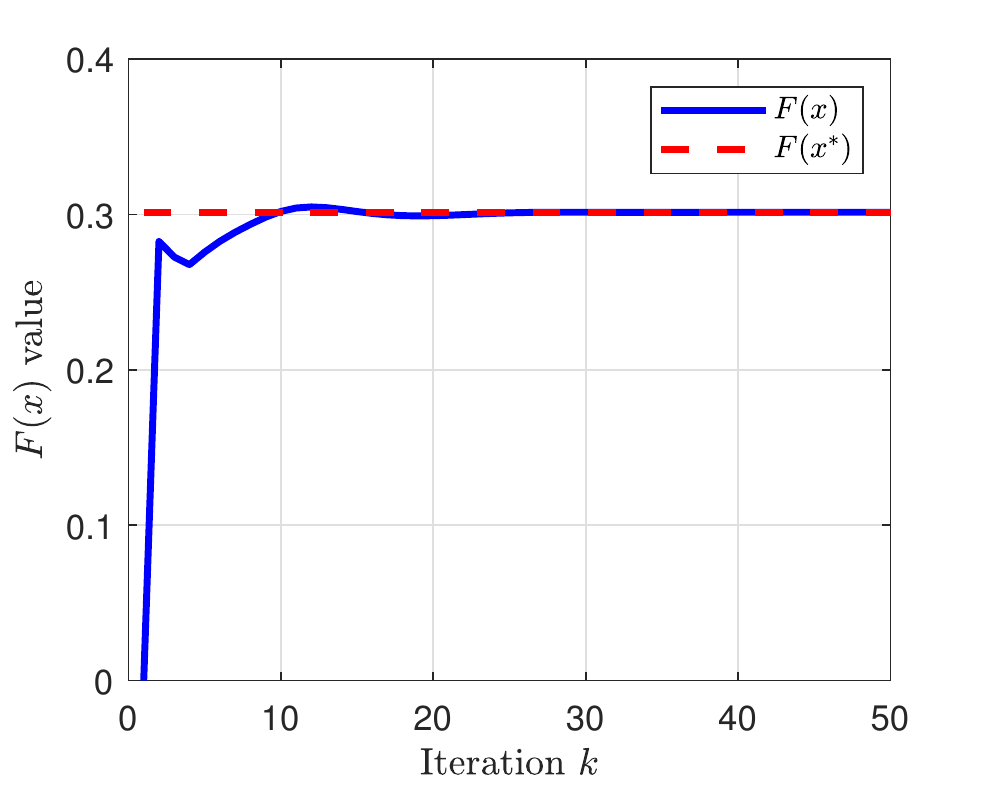}
		\centering
		\caption{Cost function value versus the number of iterations with the proposed method.}\label{fig5}
	\end{figure}
	
	\begin{figure}[H]
		\centering
		\includegraphics[width=5.0cm, height=4.0cm]{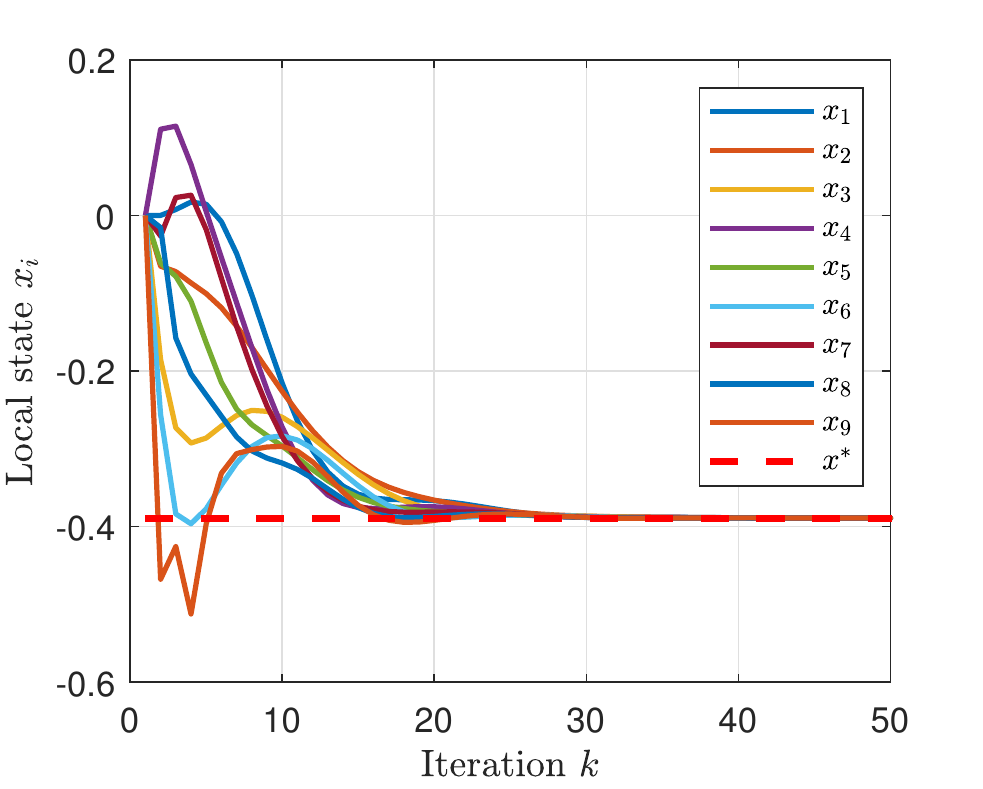}
		\centering
		\caption{Local states versus the number of iterations with the proposed method.}\label{fig6}
	\end{figure}
	
	\begin{figure}[H]
		\centering
		\includegraphics[width=5.0cm, height=4.0cm]{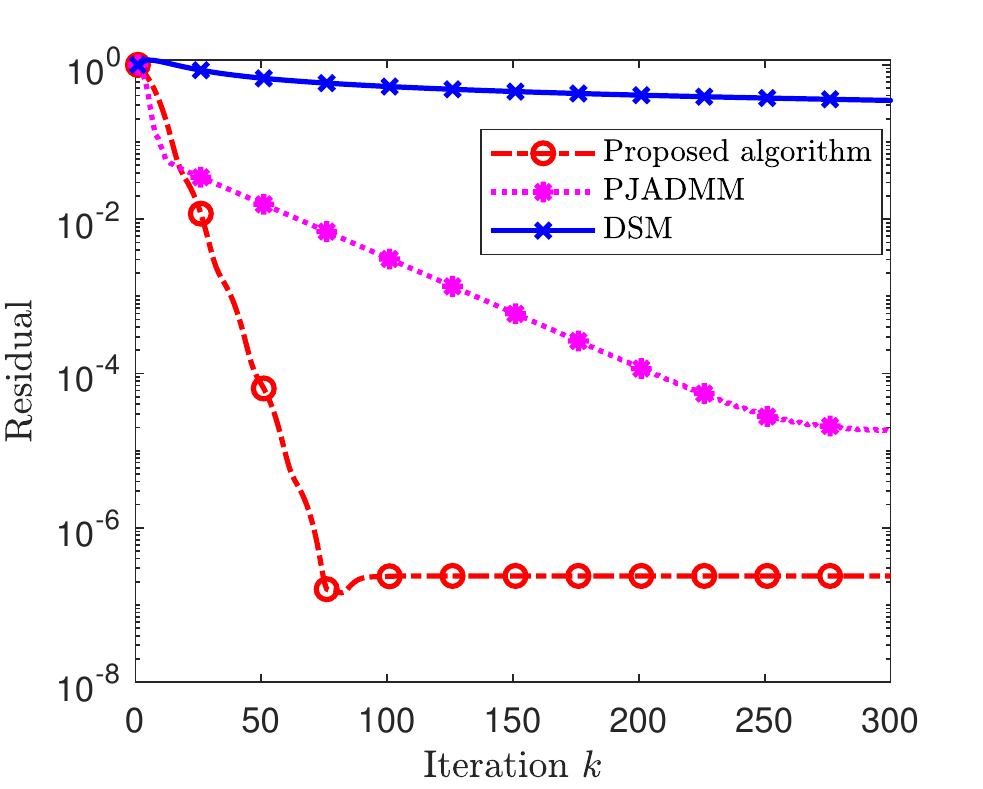}
		\centering
		\caption{Comparison with the existing algorithms.}\label{fig7}
	\end{figure}
	\begin{figure}[H]
		\centering
		\includegraphics[width=5.0cm, height=4.0cm]{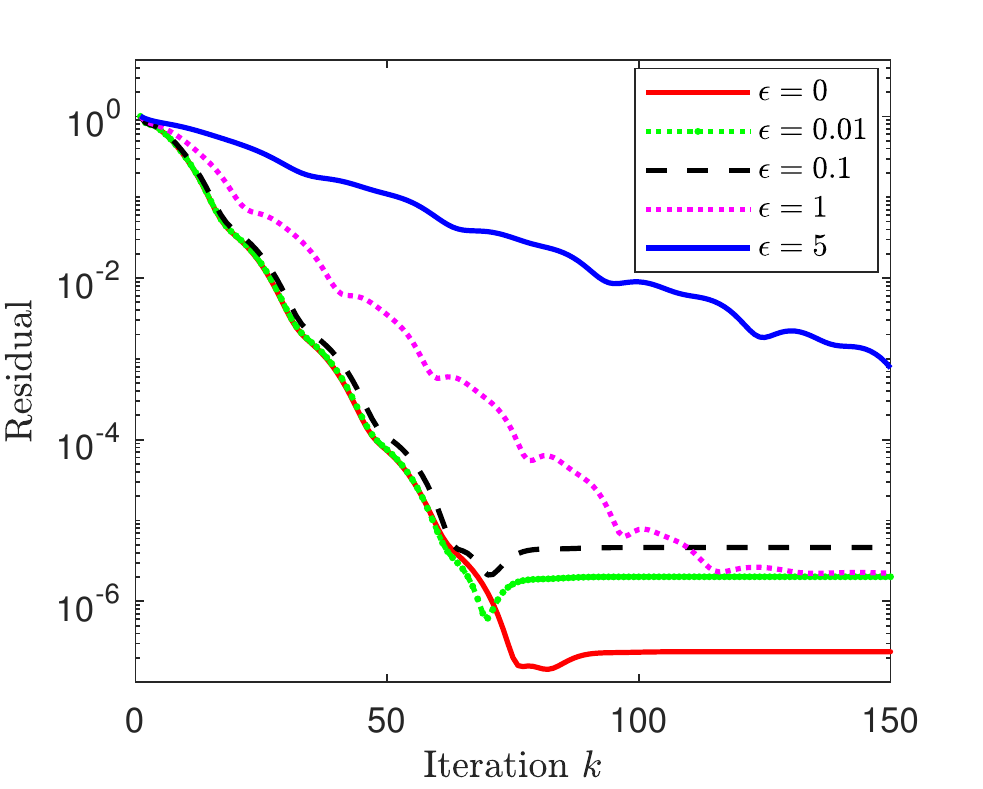}
		\centering
		\caption{Effect on the residual with different $\epsilon$.}\label{fig8}
	\end{figure}
	
	\section{Conclusion}\label{section6}
	In this paper, the distributed sequential ADMM algorithm in \cite{wei2012distributed} has been revised by remodeling the updating rule of primal variables, while without adding extra procedures. The newly presented distributed ADMM algorithm is suitable for parallel computing, and the optimality for both global cost function and the variables stored in agents has been proved. 
	A numerical example is considered to verify the effectiveness of our algorithm, which shows improved performance compared with a typical distributed parallel ADMM algorithm (JPADMM). 
	The issues of privacy-preservation and application of distributed optimization problem in smart grids will be interesting in future work. 
	
	%
	%
	%

	%

	\bibliographystyle{ieeetr} 
	\nocite{*} 
	\bibliography{main} 
\end{document}